\newcommand\w{{\omega}}
\newcommand{\B}{{\mathcal B}}
\newcommand{\F}{\mathcal{F}}
\newcommand{\orr}{\overrightarrow}
\newcommand{\olr}{\overleftarrow}
\newcommand{\olrr}{\overleftrightarrow}
\newcommand{\PP}{{\mathcal P}}
\newcommand{\UU}{{\mathcal U}}
\newcommand{\V}{{\mathcal V}}
\newcommand{\kk}{{\kappa}}
\newtheorem{theorem}{Theorem}
\newtheorem{question}{Question}
\theoremstyle{definition}
\newtheorem{corollary}{Corollary}
\begin{document}

\title{Asymptotic structures of cardinals}
\author{O.~Petrenko, I.~Protasov, S.~Slobodianiuk}
\date{}
\maketitle

\begin{abstract}
A ballean is a set $X$ endowed with some family $\F$ of its subsets, called the balls, in such a way that $(X,\F)$  can be considered as an asymptotic counterpart of a uniform topological space. Given a cardinal $\kappa$, we define $\F$ using a natural order structure on $\kappa$. We characterize balleans up to coarse equivalence, give the criterions of metrizability and cellularity, calculate the basic cardinal invariant of these balleans. We conclude the paper with discussion of some special ultrafilters on cardinal balleans.

\

{\bf 2010 AMS Classification}: 54A25, 05A18

\

{\bf Keywords}: cardinal balleans, coarse equivalence, metrizability, cellularity, cardinal invariants, ultrafilter.
\end{abstract}

\section{Introduction}

Following \cite{b13} we say that a {\it ball structure} is a triple $\mathcal{B}=(X,P,B)$, where $X$, $P$ are non-empty sets and, for
every $x\in X$ and $\alpha\in P$, $B(x,\alpha)$ is a subset of $X$
which is called a \emph{ball of radius $\alpha$ around $x$}. It is
supposed that $x\in B(x,\alpha)$ for all $x\in X$ and $\alpha\in P$.
The set $X$ is called the {\it support} of $\mathcal{B}$, $P$ is
called the {\it set of radii}. 

Given any $x\in X, A\subseteq X,
\alpha\in P$, we set
$$B^*(x,\alpha)=\{y\in X:x\in B(y,\alpha)\},\
B(A,\alpha)=\bigcup_{a\in A}B(a,\alpha).$$

A ball structure $\mathcal{B}=(X,P,B)$ is called a {\it ballean} if 

\begin{itemize}
\item[(1)] for any $\alpha,\beta\in P$,
there exist $\alpha',\beta'$ such that, for every $x\in X$,
$$B(x,\alpha)\subseteq B^*(x,\alpha'),\ B^*(x,\beta)\subseteq B(x,\beta');$$

\item[(2)] for any $\alpha,\beta\in P$,
there exists $\gamma\in P$ such that, for every $x\in X$,
$$B(B(x,\alpha),\beta)\subseteq B(x,\gamma);$$

\item[(3)] for any $x,y\in X$, there exists $\alpha\in P$ such that $y\in B(x, \alpha)$.
\end{itemize}

A ballean on $X$ can also be determined in terms of entourages of the diagonal $\Delta_X$ of $X\times X$, in this case it is called a coarse structure \cite{b14}. For balleans as counterparts of uniform topological spaces see \cite[Chapter 1]{b13}.

Let $\mathcal{B}=(X,P,B)$, $\mathcal{B'}=(X',P',B')$
be balleans. A mapping $f:X\to X'$ is called a
$\prec$-\emph{mapping} if, for every $\alpha\in P$, there exists
$\alpha'\in P'$ such that, for every $x\in X$,
$f(B(x,\alpha))\subseteq B'(f(x),\alpha')$. If there exists a bijection $f:X\rightarrow X'$ such that $f$ and $f^{-1}$ are $\prec$-mappings, $\B$ and $\B'$ are called {\it asymorphic} and $f$ is called an {\it asymorphism}.

For a ballean $\B = (X,P,B)$, a subset $Y\subseteq X$ is called {\it large} if there is $\alpha \in P$ such that $X = B(Y, \alpha)$. A subset $V$ of $X$ is called {\it bounded} if $V\subseteq B(x,\alpha)$ for some $x\in X$ and $\alpha\in P$. Each non-empty subset $Y\subseteq X$ determines a {\it subballean} $\B_Y = (Y,P,B_Y)$ where $B_Y(y,\alpha) = Y \cap B(y, \alpha)$.

We say that $\B$ and $\B'$ are {\it coarsely equivalent} if there exist large subset $Y\subseteq X$ and $Y' \subseteq X'$ such that the subballeans $\B_Y$ and $\B'_{Y'}$ are asymorphic.

Given a cardinal $\kappa > 0$ and ordinals $x,\alpha \in \kappa$, we put
$$\orr{B}(x,\alpha) = [x, x + \alpha] = \{y\in \kappa: x\leqslant y \leqslant x + \alpha\}, \olr{B}(x,\alpha) = \{y\in \kappa: x\in[y, y+\alpha]\},$$
$$\olrr{B}(x,\alpha) = \orr{B}(x,\alpha) \cup \olr{B}(x,\alpha),$$
so we have got three ball structures
$$\orr{\kappa} = (\kappa, \kappa, \orr{B}), \olr{\kappa} = (\kappa, \kappa, \olr{B}), \olrr{\kappa} = (\kappa, \kappa, \olrr{B}).$$

It is easy to see that, for $\kappa > 1$, the ball structures $\orr{\kappa}$ and $\olr{\kappa}$ do not satisfy (1), so $\orr{\kappa}$ and $\olr{\kappa}$ are not balleans, but $\olrr{\kappa}$ is a ballean for each $\kappa > 0$. This ballean $\olrr{\kappa}$ is called a {\it cardinal ballean} on $\kappa$.

If $\kappa$ is finite then the ballean $\olrr{\kappa}$ is bounded and any two bounded balleans are coarsely equivalent, so {\bf in what follows all cardinal balleans are supposed to be infinite}.

In section 2, we characterize cardinal balleans up to coarse equivalence. The criterions of metrizability and cellularity are given in section 3. In section 4 we study the basic cardinal invariants of cardinal ballean. In section 5 we consider the action of the combinatorial derivation on cardinals. We conclude the paper with discussion in section 6 of some special ultrafilters on cardinal balleans.

\section{Coarse equivalence}

For a ballean $\B = (X,P,B)$, we use a preordering $<$ on $P$ defined by the rule: $\alpha < \beta$ if and only if $B(x, \alpha) \subseteq B(x,\beta)$ for each $x\in X$. A {\it cofinality} $cf \B$ is the minimal cardinality of cofinal subsets of $P$.

A ballean $\B$ is called {\it ordinal} if there exists a well-ordered by $<$ cofinal subset of $P$. If $\B$ is ordinal then $\B$ is asymorphic to $\B' = (X, \kappa, B')$ for some cardinal $\kappa$. Each ballean coarsely equivalent to some ordinal ballean is ordinal. Clearly, $\olrr{\kappa}$ is ordinal for each $\kappa$.

We say that a family $\F$ of subsets of $X$ is {\it uniformly bounded in $\B$} if there exists $\alpha\in P$ such that, for every $F\in \F$, there is $x\in X$ such that $F\subseteq B(x,\alpha)$.

\begin{theorem}
A ballean $\B(X, \kappa, B)$ is coarsely equivalent to $\olrr{\kappa}$ if and only if there exists $x_0\in X$ and $\gamma\in\kappa$ such that
\begin{itemize}
\item[(i)] $B(x_0, \alpha + \gamma)\setminus B(x_0, \alpha) \ne \varnothing$ for each $\alpha \in \kappa$;
\item[(ii)] for every $\beta \in \kappa$, the family $\{B(x_0, \alpha + \beta)\setminus B(x_0, \alpha) : \alpha\in \kappa\}$ is uniformly bounded in $\B$.
\end{itemize}
\end{theorem}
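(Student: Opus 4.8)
The plan is to prove the two implications separately: for necessity, check that $\olrr{\kappa}$ satisfies (i) and (ii) and that the property ``there exist $x_0$ and $\gamma$ satisfying (i) and (ii)'' is invariant under coarse equivalence; for sufficiency, use $x_0$ and $\gamma$ to produce inside $X$ a large subset on which $\B$ is asymorphic to $\olrr{\kappa}$.

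For necessity, in $\olrr{\kappa}$ we have $\olr{B}(0,\alpha)=\{0\}$, hence $\olrr{B}(0,\alpha)=[0,\alpha]$; so (i) holds with $x_0=0$, $\gamma=1$ (the difference $\olrr{B}(0,\alpha+1)\setminus\olrr{B}(0,\alpha)$ is $\{\alpha+1\}$) and (ii) holds because $\olrr{B}(0,\alpha+\beta)\setminus\olrr{B}(0,\alpha)\subseteq[\alpha,\alpha+\beta]=\orr{B}(\alpha,\beta)$, a family of balls of the single radius $\beta$. One then checks that the conjunction of (i) and (ii) survives the two operations out of which coarse equivalence is built. Under an asymorphism $f\colon\B\to\B'$ one moves $x_0$ to $f(x_0)$; the radius $\gamma$ cannot be transported verbatim, but since $f$ and $f^{-1}$ are $\prec$-mappings one picks a radius $\gamma'$ of $\B'$ dominating the $f$-image of $\gamma$ and recovers (i), (ii) for $\B'$ after absorbing the resulting slack with the ballean axioms (1) and (2) (uniform boundedness transports because the image of a uniformly bounded family under a $\prec$-mapping is uniformly bounded). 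For a large subballean $\B_Y$ one chooses $y_0\in Y$ with $x_0\in B(y_0,\delta)$, $\delta$ witnessing the largeness of $Y$, and enlarges $\gamma$ to offset the shift of basepoint and the passage to intersections with $Y$. These verifications are routine but have to be done with care, precisely because (i) and (ii) single out one point and one radius.

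For sufficiency, assume (i) and (ii) hold at $(x_0,\gamma)$, so $\gamma\geqslant 1$, and define $h\colon X\to\kappa$ by letting $h(x)$ be the least $\mu$ with $x\in B(x_0,\mu)$ (this exists by axiom (3)); put $S_\mu=h^{-1}(\mu)$ and $R=h(X)$. Condition (i) says exactly that $R\cap(\alpha,\alpha+\gamma]\ne\varnothing$ for every $\alpha$, whence $R$ is large in $\olrr{\kappa}$ (each $\beta$ lies in $\olr{B}(r,\gamma)$ for any $r\in R\cap(\beta,\beta+\gamma]$). For a successor level one has $S_{\mu+1}\subseteq B(x_0,\mu+1)\setminus B(x_0,\mu)$, so by (ii) with $\beta=1$ all such spheres sit in balls of one fixed radius; the limit levels need (ii) applied with suitably chosen radii. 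Picking one point from each nonempty $S_\mu$ gives a transversal $Y$; then $Y$ should be large in $\B$ (immediate for points of successor spheres, from the uniform bound on spheres together with axioms (1) and (2), while the remaining points use the $\gamma$-density of $R$), $h|_Y\colon Y\to R$ is a bijection, and the aim is to show it is an asymorphism of $\B_Y$ onto $\olrr{\kappa}_R$. Since $Y$ is large in $\B$ and $R$ is large in $\olrr{\kappa}$, this yields that $\B$ is coarsely equivalent to $\olrr{\kappa}$. The substance of the asymorphism is that a ball $B(y,\alpha)$ meets only the spheres $S_\mu$ with $\mu$ in an interval of bounded width about $h(y)$, uniformly in $y$, together with the symmetric statement for $(h|_Y)^{-1}$.

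The step I expect to be the main obstacle is exactly this last uniformity — equivalently, the uniform control of the spheres $S_\mu$ and of the distortion of the index map. The difficulty is that ballean axiom (2) only supplies a non-uniform composition bound $B(B(x_0,\mu),\alpha)\subseteq B(x_0,\sigma)$ with $\sigma=\sigma(\mu,\alpha)$, whereas in $\olrr{\kappa}$ one has the additive estimate $\olrr{B}(\olrr{B}(0,\mu),\alpha)\subseteq\olrr{B}(0,\mu+\alpha)$; bridging this gap is where condition (ii) does its real work. The delicate case is that of limit levels $\mu$, where the crude use of (ii) only bounds $S_\mu$ by a $\mu$-dependent radius: one must either deduce from (i) and (ii) that such spheres are empty or uniformly small, or else distribute the points of a large sphere over several consecutive values of the index so as to keep $h|_Y$ injective while remaining an asymorphism. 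The regular/singular behaviour of $\kappa$ also enters, through suprema of fewer-than-$\kappa$ radii taken along the argument, and is accommodated by arranging the sphere decomposition to have length $\kappa$ with all the radii involved below $\kappa$.
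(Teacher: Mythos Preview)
Your handling of necessity is more careful than the paper's: the paper simply checks that $\olrr{\kappa}$ itself satisfies (i) and (ii) with $x_0=0$, $\gamma=1$, and says nothing about why an arbitrary $\B$ coarsely equivalent to $\olrr{\kappa}$ inherits such a pair $(x_0,\gamma)$. Your proposed verification that (i)--(ii) pass through asymorphisms and through large subballeans is exactly what is needed to make the ``only if'' direction honest.

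For sufficiency, however, you are making life much harder than necessary, and the obstacle you identify --- uniform control of limit spheres $S_\mu$ --- is a genuine obstruction to your plan as stated: condition (ii) bounds sets of the form $B(x_0,\alpha+\beta)\setminus B(x_0,\alpha)$, and a limit sphere $S_\mu=B(x_0,\mu)\setminus\bigcup_{\mu'<\mu}B(x_0,\mu')$ is not of that form, so nothing in the hypotheses lets you bound it. The paper sidesteps this entirely by never looking at spheres. Instead, it partitions $\kappa$ into consecutive intervals $[\alpha,\alpha+\gamma)$ of the \emph{fixed} width $\gamma$, indexed by a set $A\subseteq\kappa$ of order type $\kappa$; for each $\alpha\in A$ it uses (i) to pick $x(\alpha)\in B(x_0,\alpha+\gamma)\setminus B(x_0,\alpha)$, sets $L=\{x(\alpha):\alpha\in A\}$, and sends $x(\alpha)$ to the position of $\alpha$ in $A$. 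Now every point of $L$ lies in an annulus of width $\gamma$, and two points $x(\alpha)$, $x(\alpha')$ whose images differ by at most $\beta$ lie in an annulus of width at most $(\beta+2)\cdot\gamma$; condition (ii) applied with this $\beta$-dependent width gives the uniform bound directly, with no limit-ordinal case distinction. The largeness of $L$ and the other direction of the asymorphism come out of the same picture. So the missing idea in your plan is to replace the level-set decomposition by a fixed-width annular decomposition keyed to the specific $\gamma$ appearing in (i).
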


\begin{proof}
Clearly $\olrr{\kappa}$ satisfies $(i)$ and $(ii)$ with $x_0 = 0$ and $\gamma = 1$.

To prove the converse statement, we choose a subset $A\subseteq \kappa$ such that $[\alpha, \alpha + \gamma) \cap [\alpha' , \alpha' + \gamma) = \varnothing$ for all distinct $\alpha, \alpha' \in A$ and $\kappa = \cup_{\alpha \in A}[\alpha, \alpha + \gamma)$. For each $\alpha \in A$, we use $(i)$ to pick some element $x(\alpha) \in B(x_0, \alpha + \gamma) \setminus B(x_0, \alpha)$ and note that the set $L = \{x(\alpha): \alpha \in A\}$ is large in $\B$. Then we denote by $f:A\to \kappa$ the natural well-ordering of $A$ as a subset of $\kappa$ and, for each $\alpha \in A$, put $h(x(\alpha)) = f(\alpha)$. Clearly, $h$ is a bijection from $L$ to $\kappa$. By $(ii)$, $h$ is an asymorphism between $L$ and $\kappa$. Since $L$ is large in $\B$, we conclude that $\B$ and $\kappa$ are coarsely equivalent.
\end{proof}

For distinct cardinals $\kappa$ and $\kappa'$, the balleans $\olrr{\kappa}$ and $\olrr{\kappa'}$ are not coarsely equivalent because (see Theorem \ref{t_4} $(i)$) each large subset of $\olrr{\kappa}$ (resp. $\olrr{\kappa'}$ has cardinality $\kappa$ (resp. $\kappa'$), so there are no bijections (in particular, asymorphisms) between large subsets of $\olrr{\kappa}$ and $\olrr{\kappa'}$.

\section{Metrizability and cellularity}

Each metric space $(X,d)$ defines a {\it metric ballean} $(X, \mathbb{R}^+, B_d)$, where $B_d(x,r) = \{y \in X: d(x,y) \leqslant r\}$. By \cite[Theorem 2.1.1]{b13}, for a ballean $\B$, the following conditions are equivalent: $cf \B \leqslant \aleph_0$, $\B$ is asymorphic to some metric ballean, $\B$ is coarsely equivalent to some metric ballean. In this case $\B$ is called {\it metrizable}. Applying this criterion, we get

\begin{theorem}
For a cardinal $\kappa$, the following conditions are equivalent:
\begin{itemize}
\item[(i)] $cf \kappa = \aleph_0$;
\item[(ii)]  $\olrr{\kappa}$ is asymorphic to some metric ballean;
\item[(iii)] $\olrr{\kappa}$ is coarsely equivalent to some metric ballean.
\end{itemize}
\end{theorem}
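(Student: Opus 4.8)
The plan is to apply the cited criterion from \cite[Theorem 2.1.1]{b13}, which reduces everything to computing $cf\,\olrr{\kappa}$: the ballean is metrizable (in either of the equivalent senses (ii), (iii)) precisely when $cf\,\olrr{\kappa}\leqslant\aleph_0$. So the real content is the equivalence of (i) with the statement $cf\,\olrr{\kappa}\leqslant\aleph_0$, and since $\kappa$ is infinite and $cf\,\kappa$ is always an infinite cardinal, this is the same as $cf\,\olrr{\kappa}=\aleph_0 \iff cf\,\kappa=\aleph_0$. I would prove this by showing $cf\,\olrr{\kappa}=cf\,\kappa$ outright.

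First I would establish $cf\,\olrr{\kappa}\leqslant cf\,\kappa$. Recall the preordering on the radius set: $\alpha<\beta$ iff $\olrr{B}(x,\alpha)\subseteq\olrr{B}(x,\beta)$ for all $x\in\kappa$. I claim this preorder is just the usual order on $\kappa$ (it is immediate from the definitions of $\orr{B}$ and $\olr{B}$ that $\alpha\leqslant\beta$ as ordinals forces the ball inclusion, and conversely taking $x=0$ and looking at $\orr{B}(0,\alpha)=[0,\alpha]$ recovers $\alpha\leqslant\beta$). Hence a cofinal subset of $(\kappa,<)$ is exactly a cofinal subset of the ordinal $\kappa$ in the usual sense, so $cf\,\olrr{\kappa}=cf\,\kappa$ directly from the definition of cofinality of a ballean. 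This in fact gives the equivalence in one clean step, and the separate inequalities are unnecessary.

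Assembling: if $cf\,\kappa=\aleph_0$, then $cf\,\olrr{\kappa}=\aleph_0\leqslant\aleph_0$, so by \cite[Theorem 2.1.1]{b13} $\olrr{\kappa}$ is asymorphic to, and hence coarsely equivalent to, a metric ballean, giving (i)$\Rightarrow$(ii)$\Rightarrow$(iii). Conversely, (iii) implies $cf\,\olrr{\kappa}\leqslant\aleph_0$ by the same cited criterion, so $cf\,\kappa\leqslant\aleph_0$; since $\kappa$ is an infinite cardinal its cofinality is an infinite cardinal, whence $cf\,\kappa=\aleph_0$, which is (i). The implication (ii)$\Rightarrow$(iii) is trivial since asymorphic balleans are coarsely equivalent (take $Y=X$, $Y'=X'$).

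The only point requiring care — and the one I would not want to skip — is the identification of the ballean preorder $<$ on the radius set with the ordinal order on $\kappa$; once that is pinned down, $cf\,\olrr{\kappa}=cf\,\kappa$ is immediate and the rest is a direct invocation of the cited metrization criterion. There is no genuine obstacle here; the theorem is essentially a translation of a known general fact about balleans into the cardinal setting.
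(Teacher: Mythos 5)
Your proof is correct and follows exactly the route the paper intends: the paper gives no argument beyond ``Applying this criterion, we get,'' and your identification of the radius preorder on $\olrr{\kappa}$ with the ordinal order (via the observation that $\olrr{B}(0,\beta)=[0,\beta]$), yielding $cf\,\olrr{\kappa}=cf\,\kappa$, is precisely the omitted computation needed to invoke \cite[Theorem 2.1.1]{b13}.
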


Given an arbitrary ballean $\B = (X,P,B)$, $x,y\in X$ and $\alpha\in P$, we say that $x$ and $y$ are {\it $\alpha$-path connected} if there exists a finite sequence $x_0, \ldots, x_n$, $x_0 = x, x_n = y$ such that $x_{i+1}\in B(x_i, \alpha)$ for each $i\in \{0, \ldots , n-1\}$. For any $x\in X$ and $\alpha\in P$, we set

$$B^\square(x, \alpha) = \{y\in X: x,y \mbox{ are $\alpha$-path connected}\}.$$

The ballean $\B^\square = (X, P, B^\square)$ is called a {\it cellularization} of $\B$. A ballean $\B$ is called {\it cellular} if the identity mapping $id:X\to X$ is an asymorphism between $\B$ and $\B^\square$. By \cite[Theorem 3.1.3]{b13}, $\B$ is cellular if and only if $asdim\,\B = 0$, where $asdim\,\B$ is the asymptotic dimension of $\B$. By \cite[Theorem 3.1.1]{b13}, a metric ballean is cellular if and only if $\B$ is asymorphic to a ballean of some ultrametric space.

\begin{theorem}
For a cardinal $\kappa$, the ballean $\olrr{\kappa}$ is cellular if and only if $\kappa > \aleph_0$.
\end{theorem}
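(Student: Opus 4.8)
The plan is to compute the cellularization balls $\olrr{B}^{\square}(x,\alpha)$ explicitly for every radius $\alpha\in\kappa$ and then read off both implications. Since $\olrr{B}(x,\alpha)\subseteq\olrr{B}^{\square}(x,\alpha)$ always holds, the identity $\olrr{\kappa}\to\olrr{\kappa}^{\square}$ is automatically a $\prec$-mapping, so the whole issue is whether the identity $\olrr{\kappa}^{\square}\to\olrr{\kappa}$ is a $\prec$-mapping, i.e.\ whether for each $\alpha$ there is $\beta$ with $\olrr{B}^{\square}(x,\alpha)\subseteq\olrr{B}(x,\beta)$ for all $x$.

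Two elementary remarks come first. The relation ``$y\in\olrr{B}(x,\alpha)$'' is \emph{symmetric} in $x,y$: it says exactly that $\max\{x,y\}\leqslant\min\{x,y\}+\alpha$. Hence $\olrr{B}^{\square}(x,\alpha)$ is the connected component of $x$ in the graph on $\kappa$ whose edge relation $R_\alpha$ is this condition. Secondly, for $\alpha\geqslant1$ the ordinal $\sigma:=\alpha\cdot\omega$ is additively indecomposable: if $\rho,\rho'<\alpha\omega$ then $\rho<\alpha m$, $\rho'<\alpha m'$ for some finite $m,m'$, whence $\rho+\rho'<\alpha(m+m')<\alpha\omega$. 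Consequently, as $\alpha<\sigma$, we get $\rho+\alpha<\sigma$ for every $\rho<\sigma$, and $\sigma\lambda+\sigma=\sigma(\lambda+1)$.

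The key lemma I would establish is: for $1\leqslant\alpha<\kappa$, writing $x=\sigma\lambda+\rho$ with $\rho<\sigma$ via the division algorithm, one has $\olrr{B}^{\square}(x,\alpha)=[\sigma\lambda,\sigma(\lambda+1))\cap\kappa$. For $\subseteq$ it suffices to see that a single $R_\alpha$-edge stays inside a block $[\sigma\lambda,\sigma(\lambda+1))$: if $z\,R_\alpha\,z'$ with $z'\leqslant z$, then writing $z'=\sigma\lambda+\rho'$ with $\rho'<\sigma$ gives $\sigma\lambda\leqslant z'\leqslant z\leqslant z'+\alpha=\sigma\lambda+(\rho'+\alpha)<\sigma\lambda+\sigma=\sigma(\lambda+1)$, so $z,z'$ lie in one block; since the blocks partition $\kappa$, the whole $R_\alpha$-component of $x$ does too. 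For $\supseteq$: starting at $\sigma\lambda$ and taking upward $R_\alpha$-steps of size $\alpha$ reaches $\sigma\lambda+\alpha n$ for all $n<\omega$ together with all ordinals in between, hence every ordinal below $\sup_n(\sigma\lambda+\alpha n)=\sigma\lambda+\alpha\omega=\sigma(\lambda+1)$; so the component of $\sigma\lambda$ — and therefore of every $x$ in that block — is the whole block. I expect this lemma to be the only real obstacle: one must handle left/right ordinal subtraction carefully and make sure the indecomposability estimates are valid for an arbitrary radius $\alpha$, not merely for powers of $\omega$.

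The theorem then follows at once. Suppose $\kappa>\aleph_0$. Given $\alpha<\kappa$, which we may take with $\alpha\geqslant1$, put $\beta=\sigma=\alpha\omega$; since $|\alpha\omega|=|\alpha|\cdot\aleph_0<\kappa$ and $\kappa$ is a cardinal, $\beta<\kappa$ is a legitimate radius. For $x=\sigma\lambda+\rho$ one has $x+\sigma=\sigma(\lambda+1)$, so every $y$ in the block $[\sigma\lambda,\sigma(\lambda+1))$ with $y\geqslant x$ lies in $[x,x+\sigma]$, while every such $y$ with $y<x$ satisfies $y+\sigma=\sigma(\lambda+1)>x$; hence $\olrr{B}^{\square}(x,\alpha)\subseteq[\sigma\lambda,\sigma(\lambda+1))\subseteq\olrr{B}(x,\sigma)=\olrr{B}(x,\beta)$. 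Thus $\olrr{\kappa}^{\square}\to\olrr{\kappa}$ is a $\prec$-mapping and $\olrr{\kappa}$ is cellular. Conversely, if $\kappa=\aleph_0$, take $\alpha=1$; then $\sigma=\omega$ and the lemma gives $\olrr{B}^{\square}(x,1)=\kappa$ for every $x$, whereas every ball $\olrr{B}(x,\beta)$ with $\beta<\omega$ is finite, so no $\beta$ works and $\olrr{\kappa}$ is not cellular.
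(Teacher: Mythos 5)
Your proof is correct and takes essentially the same route as the paper: both arguments control $\olrr{B}^{\square}(x,\alpha)$ by a ball of radius $\sigma=\alpha\cdot\omega$ (the paper via $\olrr{B}^{\square}(x,\alpha)\subseteq\bigcup_{n}\olrr{B}(x,n\alpha)\subseteq\olrr{B}(x,\sup_n n\alpha)$, which lies in $\kappa$ exactly when $\kappa>\aleph_0$), and both refute cellularity for $\kappa=\aleph_0$ by noting that the radius-$1$ component of any point is all of $\aleph_0$ while genuine balls are bounded. Your exact identification of the component as the block $[\sigma\lambda,\sigma(\lambda+1))$ is more than the theorem requires, but it is accurate and only strengthens the containment the paper uses.
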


\begin{proof}
The ballean $\olrr{\aleph_0}$ is not cellular because $B^\square(0,1) = \aleph_0$ so $\olrr{\aleph_0}^\square$ is bounded but $\olrr{\aleph_0}$ is unbounded.

Assume that $\kappa > \aleph_0$, fix an arbitrary $\alpha \in \kappa$ and note that 

$$B^\square(x, \alpha) \subseteq \bigcup_{n\in \aleph_0} \olrr{B}(x, n\alpha)\subseteq \olrr{B}(x,\gamma),$$
where $\gamma = \sup_{n\in \aleph_0} n\alpha$. Since $\kappa > \aleph_0$, we have $\gamma \in \kappa$. Hence, $id:\kappa \to \kappa$ is an asymorphism between $\olrr{\kappa}$ and $\olrr{\kappa}^\square$.
\end{proof}

\section{Cardinal invariants}

Given a ballean $\mathcal{B}=(X,P,B)$, a subset $A$ of $X$ is called 
\begin{itemize}
\item {\it large} if $X=B(A,\alpha)$ for some $\alpha\in P$;
\item {\it small} if $X\setminus B(A,\alpha)$ is large for every $\alpha \in P$;
\item {\it thick} if, for every $\alpha \in P$, there exists $a\in A$ such that $B(a,\alpha)\subseteq A$;
\item {\it thin} if, for every $\alpha\in P$, there exists a bounded subset $V$ of $X$ such that $B(a,\alpha)\cap B(a',\alpha) = \varnothing$ for all distinct $a, a'\in A\setminus V$.
\end{itemize}

We note that large, small, thick and thin subsets can be considered as asymptotic counterparts of dense, nowhere dense, open and uniformly discrete subsets subsets of a uniform topological space.

The following cardinal characteristics of a ballean $\B$ were introduced and studied in \cite{b1}, \cite{b9}, \cite{b10}, see also \cite[Chapter 9]{b13}. All these characteristics are invariant under asymorphisms.

\begin{itemize}
\item[] $den \,\mathcal{B} = \min\{|L|$: $L$ is a large subset of $X\},$

\item[] $res\,\mathcal{B} = \sup\{|\mathcal{F}|: \mathcal{F}\mbox{ is a family of pairwise disjoint
large subsets of }X\},$

\item[] $cores\,\mathcal{B} = \min\{|\mathcal{F}|: \mathcal{F}\mbox{ is a covering of
}X\mbox{ by small subsets}\},$

\item[] $thick\,\mathcal{B} = \sup\{|\mathcal{F}|: \mathcal{F}\mbox{ is a family of pairwise disjoint
thick subsets of }X\},$

\item[] $thin\,\mathcal{B} = \min\{|\mathcal{F}|: \mathcal{F}\mbox{ is a covering of
}X\mbox{ by thin subsets}\},$

\item[] $spread\, \mathcal{B} = \sup\{|Y|_{\mathcal{B}}$: $Y$ is a thin subset of $X\},$ where $|Y|_{\mathcal{B}}=\min\{|Y\setminus V|: V$ is a bounded subset of $X.\}$
\end{itemize}

To prove Theorem \ref{t_4}, we need some ordinal arithmetics (see \cite{b2}). An ordinal $\gamma$, $\gamma \ne 0$ is called {\it additively indecomposable} if one of the following statements holds

\begin{itemize}
\item $\alpha + \beta < \gamma$ for any ordinal  $\alpha, \beta < \gamma$;
\item  $\alpha + \gamma < \gamma$ for every ordinal  $\alpha < \gamma$;
\end{itemize}

Every ordinal $\alpha > 0$ can be written uniquiely in the Cantor normal form
$$\alpha = n_1 \cdot \gamma_1 + \ldots + n_k \cdot \gamma_k,$$
where $\gamma_1 > \ldots > \gamma_k$ are additively indecomposable, $n_1, \ldots, n_k$ are natural numbers. We put 

$$|\alpha| = |\{\beta: \beta < \alpha\}|, ||\alpha|| = n_1 + \ldots + n_k, r(\alpha) = \gamma_k.$$

\begin{theorem}\label{t_4}
For every cardinal $\kappa$, the following statements hold
\begin{itemize}
\item[(i)] $den \olrr{\kappa} = spread \olrr{\kappa} = \kappa$;
\item[(ii)] $res \olrr{\kappa} = \kappa$ and $\olrr{\kappa}$ can be partitioned in $\kappa$ large subsets;
\item[(iii)] $cores \olrr{\kappa} = \aleph_0$;
\item[(iv)] $thick \olrr{\kappa} = \kappa$ and $\olrr{\kappa}$ can be partitioned in $\kappa$ thick subsets;
\item[(v)] $thin \olrr{\kappa} = cf \kappa$ if $\kappa$ is a limit cardinal and $thin\, \kappa^+ = \kappa$.
\end{itemize}
\end{theorem}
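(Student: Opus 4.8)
The plan rests on three facts about $\olrr\kappa$ that I would establish first. (a) Every ball $\olrr B(x,\alpha)$ is the interval $[y_0,x+\alpha]$: the forward part is $[x,x+\alpha]$ and the backward part $\{y:y\le x\le y+\alpha\}$ is an initial segment $[y_0,x]$; from $y_0+\alpha\ge x$ one gets $x+\alpha\le y_0+\alpha\cdot 2$, so $|\olrr B(x,\alpha)|\le|\alpha\cdot2|=|\alpha|+\aleph_0<\kappa$. (b) A subset is bounded iff it lies in some initial segment $[0,\delta)$ with $\delta<\kappa$; in particular every tail $(\delta,\kappa)$ is large. (c) Two working criteria: $A$ is large whenever it meets every interval $[\delta,\delta+\alpha]$ for some fixed $\alpha<\kappa$ (then $\delta\in\olr B(a,\alpha)$ for a suitable $a\in A$; and the same holds if this is true only above a bounded initial part, at the cost of enlarging the radius); and $A$ is thin iff for every $\alpha<\kappa$ there is $\rho<\kappa$ with $|A\cap[\delta,\delta+\alpha]|\le1$ for all $\delta\ge\rho$, the nontrivial implication using that $a\le a'\le a+\alpha$ puts $a'$ in $\olrr B(a,\alpha)\cap\olrr B(a',\alpha)$.

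For (i): by (a), a large $L$ with $\kappa=\olrr B(L,\alpha)$ satisfies $\kappa\le|L|\cdot(|\alpha|+\aleph_0)$, hence $|L|=\kappa$, and $\kappa$ itself is large, so $den\,\olrr\kappa=\kappa$. Also $spread\,\olrr\kappa\le|\kappa|=\kappa$, and for the reverse I would produce one thin set $Y$ with $|Y|_{\olrr\kappa}=\kappa$: for $\kappa>\aleph_0$ take the additively indecomposable ordinals $Y=\{\omega^\beta:\beta<\kappa\}$, which has cardinality $\kappa$, is cofinal, satisfies $|Y\cap(\delta,\kappa)|=\kappa$ for all $\delta$, and is thin because for $\alpha<\omega^\beta$ one has $\olrr B(\omega^\beta,\alpha)=[\omega^\beta,\omega^\beta+\alpha]$, and such intervals are pairwise disjoint once $\omega^\beta>\alpha$ (only finitely many $\omega^\beta$ are $\le\alpha$, absorbed into the bounded exceptional set); for $\kappa=\aleph_0$ take $Y=\{2^n:n\in\omega\}$. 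For (ii): $res\,\olrr\kappa\le|\kappa|=\kappa$, so it suffices to partition $\kappa$ into $\kappa$ large sets; for $\kappa>\aleph_0$ use the fibers of $r(\cdot)$, the fiber over $\omega^\beta$ being the ordinals whose Cantor normal form ends in a term $\omega^\beta\cdot c$; a short computation (writing $\delta=\omega^{\beta+1}\rho+\sigma$ with $\sigma<\omega^{\beta+1}$) shows this fiber meets every interval $[\delta,\delta+\omega^{\beta+1}]$, hence is large by (c), and there are exactly $\kappa$ admissible $\omega^\beta$ (throw $0$ into one fiber); for $\kappa=\aleph_0$ use the fibers of the $2$-adic valuation.

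For (iii): $cores\,\olrr\kappa\ge\aleph_0$ is the general fact that no unbounded ballean is a finite union of small sets (a subset of a small set is small, a set and its complement cannot both be small, and an induction passing to large subballeans handles $n$ sets). For the upper bound I would write $\kappa=\{0\}\cup\bigcup_{n\ge1}\{\alpha:||\alpha||=n\}$ and check each $S_n$ is small: for fixed $\alpha$ only finitely many additively indecomposable ordinals are $\le\alpha$, so every ball $\olrr B(\gamma,\alpha)$ with $\gamma\in S_n$ has length $\le\alpha\cdot2$, and the intervals making up $\olrr B(S_n,\alpha)$ leave, near each normal-form prefix, gaps exceeding $\alpha$, so the complement is large by (c) (a routine normal-form count). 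For (iv): $thick\,\olrr\kappa\le|\kappa|=\kappa$; since any set that contains, for every $\alpha<\kappa$, an interval $[\omega^\beta,\omega^\beta+\alpha]$ with $\omega^\beta>\alpha$ is thick (as then $\olrr B(\omega^\beta,\alpha)=[\omega^\beta,\omega^\beta+\alpha]$) and thickness passes to supersets, I would take the pairwise disjoint blocks $J_\beta=[\omega^\beta,\omega^\beta\cdot2)$, colour them with $\kappa$ colours so each colour receives $J_\beta$'s with $\beta$ cofinal in $\kappa$ (possible since $\kappa\cong\kappa\times\kappa$), dump the leftover $\kappa\setminus\bigcup_\beta J_\beta$ into the classes arbitrarily, and note each class is thick; $\kappa=\aleph_0$ is the same with finite blocks of unbounded lengths.

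For (v): if $\mathcal F$ is a cover of $\kappa$ by $\mu$ thin sets with $\mu<cf\,\kappa$, then for each $\alpha$ the supremum $\rho$ of the bounded exceptional sets is $<\kappa$, and above $\rho$ each interval $[\delta,\delta+\alpha]$ meets every member of $\mathcal F$ in at most one point, so $|\alpha|\le|[\delta,\delta+\alpha]|\le\mu$; hence $\mu\ge\sup_{\alpha<\kappa}|\alpha|$, which equals $\kappa$ if $\kappa$ is a limit cardinal and equals the predecessor $\lambda$ if $\kappa=\lambda^+$. This gives $thin\,\olrr\kappa\ge cf\,\kappa$ for limit cardinals and $thin\,\olrr{\lambda^+}\ge\lambda$. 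When $\kappa$ is a limit cardinal the reverse is trivial: a cofinal sequence $\langle\kappa_i:i<cf\,\kappa\rangle$ yields the partition $\kappa=\bigcup_{i<cf\,\kappa}[\kappa_i,\kappa_{i+1})$ into bounded, hence thin, sets. The remaining inequality $thin\,\olrr{\lambda^+}\le\lambda$ is the real content and, I expect, the main obstacle: bounded sets no longer suffice, so one must exhibit $\lambda$ genuinely sparse unbounded thin sets covering $\lambda^+$. I would build them by recursion along the Cantor normal form, using that the ``copy'' $\{\delta+\omega^\beta n:1\le n<\omega\}$ above an $\omega^{\beta+1}$-divisible $\delta$ is, at scales below $\omega^{\beta+1}$, a rescaled $\olrr{\aleph_0}$ and that $\omega$ is covered by the $\aleph_0$ thin sets $\{(2k+1)2^n:n\in\omega\}$; the delicate point is to amalgamate these colourings over the $\lambda^+$ many scales so that distinct copies at the same scale — which lie only $\omega^{\beta+1}$ apart — also become eventually separated, i.e. so that each of the $\lambda$ classes stays eventually $\alpha$-sparse simultaneously for every $\alpha<\lambda^+$. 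The other four parts reduce, as above, to short arguments with intervals and normal forms together with the trivial bounds $den,res,thick,spread\le|\kappa|=\kappa$.
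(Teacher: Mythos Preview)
Your treatment of (i)--(iv) and of the lower bounds and the limit case in (v) is correct and in places more explicit than the paper: for $spread$ and $thick$ the paper simply quotes external results about ordinal balleans, whereas you build witnesses directly, and your partition in (ii) via the fibers of $r(\cdot)$ is the construction the paper only gestures at.

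The genuine gap is the upper bound $thin\,\olrr{\lambda^+}\le\lambda$. You correctly flag it as ``the main obstacle'' and propose a multi-scale amalgamation of $\aleph_0$-colourings along the Cantor normal form, but you do not carry it out, and the ``delicate point'' you isolate---keeping each colour class simultaneously $\alpha$-sparse at all $\lambda^+$ scales---is a real difficulty for that route. The paper's construction bypasses all of this. Enumerate the additively indecomposable ordinals in $[\lambda,\lambda^+)$ as $\{\gamma_\alpha:\alpha<\lambda^+\}$; since $\gamma_{\alpha+1}=\gamma_\alpha\cdot\omega$, each interval $[\gamma_\alpha,\gamma_{\alpha+1})$ has cardinality exactly $\lambda$, and these intervals partition $[\lambda,\lambda^+)$. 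Fix bijections $f_\alpha:\lambda\to[\gamma_\alpha,\gamma_{\alpha+1})$ and put $T_\mu=\{f_\alpha(\mu):\alpha<\lambda^+\}$ for $\mu<\lambda$; throw the bounded piece $[0,\lambda)$ into any $T_\mu$. Each $T_\mu$ is thin: given a radius $\delta<\lambda^+$, pick $\alpha_0$ with $\gamma_{\alpha_0}>\delta$; for $x\in T_\mu$ with $x\ge\gamma_{\alpha_0}$, say $x\in[\gamma_\beta,\gamma_{\beta+1})$, indecomposability of $\gamma_\beta>\delta$ gives $\olrr B(x,\delta)\subseteq[\gamma_\beta,\gamma_{\beta+1})$, and $T_\mu$ meets that interval only in $x$. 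The insight you were missing is that a single transversal point per indecomposable block already does the job; no recursion or scale-amalgamation is needed.
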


\begin{proof}
$(i)$ For a large subset $L$ of $X$, we pick $\alpha \in P$ such that $X = B(L, \alpha)$, observe that $|B(x, \alpha)|\leqslant 2|\alpha|$ so $|X| \leqslant |L|(2|\alpha|)$ and $|X| = |L|$. Since the ballean $\olrr{\kappa}$ is ordinal, by \cite[Theorem 2.3]{b1}, $spread \B = den \B$ and there is a thin subset $Y$ of $X$ such that $|Y|_\B = |Y|$.

$(ii)$ For $\kappa = \aleph_0$, we identify $\kappa\setminus \{0\}$ with the set $\mathbb{N}$ of natural numbers and partition $\mathbb{N} =\cup_{n\in \aleph_0} 2^nN $, where $N$ is the set of all odd numbers. Since each subset $2^nN$ is large in $\aleph_0$, we get a desired statement.

Assume that $\kappa > \aleph_0$, we have $|L| = \kappa$, so it suffices to show that each subset $L(\gamma)$ is large in $\kappa$. Then $\olrr{B}(\alpha, \gamma)\cap L(\gamma) \ne \varnothing$ for each $\alpha \in \kappa$.

$(iii)$ The statement is trivial for $\kappa = \aleph_0$ because each singleton in $\kappa$ is small.

Assume that $\kappa > \aleph_0$ and, for each $n\in \mathbb{N}$, put

$$S_n = \{\alpha \in \kappa: ||\alpha|| = n\}.$$
We show that, for all $n\in \mathbb{N}$ and $\gamma \in \kappa$, $X\setminus \olrr{B}(S_n, \gamma)$ is large so $S_n$ is small. Since $\kappa > \aleph_0$, we have $|\Gamma| = \kappa|$ and $\Gamma$ is cofinal in $\kappa$. Thus, we may suppose that $\gamma \in \Gamma$. We take any $x\in S_n$, $y\in \olrr{B}(x,\gamma)$ and note that if $y$ has a member $m\cdot \gamma$ in its Cantor normal form then $m\leqslant n + 1$. On the other hand, if $k\cdot \gamma$ is the last member of $x + (n+2)\cdot \gamma$ in its Cantor normal form then $k \geqslant n + 2$. Therefore $\olrr{B}(x, (n+2)\cdot \gamma) \cap (\kappa \setminus B(S_n,\gamma)) \ne \varnothing$. It follows that $\kappa \setminus \olrr{B}(S_n, \gamma)$ is large.

$(iv)$ Since the ballean $\olrr{\kappa}$  is ordinal, by \cite[Theorem 3.1]{b10}, $thick \olrr{\kappa} = den \olrr{\kappa}$ and there is a disjoint family of cardinality $den \olrr{\kappa}$ consisting of thick subsets, so we can apply $(ii)$.

$(v)$ Assume that $\gamma^+ < \kappa$ but $\kappa$ can be partitioned into $\gamma$ thin subsets $\kappa = \cup_{\beta < \gamma} T_\beta$. By the definition of thin subsets, for each $\beta < \gamma$, there is $x(\beta) \in \kappa$ such that $|[\alpha, \alpha + \gamma^+]\cap T_\beta|\leqslant 1$ for each $\alpha > x(\beta)$. We choose $x\in \kappa$ such that $x > x(\beta)$ for each $\beta < \gamma$. Then $|[x, x + \gamma^+]\cap \cup_{\beta < \gamma} T_\beta|\leqslant \gamma$. Since $|[x, x + \gamma^+]| = \gamma^+$, we have $|[x, x + \gamma^+] \setminus \cup_{\beta < \gamma} T_\beta \ne \varnothing$, contradicting $\kappa = \cup_{\beta < \gamma} T_\beta$.

If $\kappa$ is a limit cardinal, by above paragraph, $\kappa$ can not be partitioned into $< cf \kappa$ thin subsets. Since each bounded subset is thin, we have $thin\, \kappa = cf \kappa$.

To conclude the proof, we partition $\kappa^+$ into $\kappa$ thin subsets. Let $\{\gamma_\alpha: \alpha < \kappa^+\}$ be the numeration of all additively indecomposable ordinals from $[\kappa, \kappa^+)$. For $\alpha \in \kappa^+$, we fix some bijection $f_\alpha:\gamma \to [\gamma_\alpha, \gamma_{\alpha + 1})$ and, for each $\lambda \in \kappa$, put
$$T_\lambda = \{f_\alpha(\lambda): \alpha \in \kappa^+\}.$$
Then $[\kappa, \kappa^+) = \cup_{\lambda < \kappa} T_\lambda$. Clearly $[0, \kappa)$ is bounded (and so thin) and if $x\in T_\lambda$ and $x > \gamma_\alpha$ then $\olrr{B}(x, \gamma_\alpha) \cap T_\lambda = \{x\}$, so each subset $T_\lambda$ is thin.
\end{proof}

It is worth to be marked that a set $\Gamma$ of all additively indecomposable ordinals is not only small ($\Gamma = S_1$) but also thin in $\olrr{\kappa}$, and so "very small" in asymptotic sense. On the other hand, for $\kappa > \aleph_0$, $\Gamma$ is unbounded and closed in the order topology on $\kappa$, and so "very large" in topological sense.

Let $\B = (X,P,B)$ be a ballean and $\alpha \in P$. Following \cite{b12}, we say that a subset $Y\subseteq X$ has {\it asymptotically isolated balls} if, for every $\beta > \alpha$, there is $y\in Y$ such that $B(y,\beta) \setminus B(y, \alpha) = \varnothing$. If $Y$ has asymptotically isolated balls for some $\alpha \in P$, we say that $Y$ has asymptotically isolated balls.

A ballean $\B$ is called {\it asymptotically scattered} if each unbounded subset of $X$ has asymptotically isolated balls. A subset $Y\subseteq X$ is called asymptotically scattered if the subballean $\B_Y$ is asymptotically scattered.

The {\it scattered number} of $\B$ is the cardinal 

$$scat\, \B = \{\min|\F|: \F \mbox{ is a covering of $X$ by asymptotically scattered subsets} \}.$$

\begin{question}
For each $\kappa$, determine or evaluate $scat \olrr{\kappa}$.
\end{question}

We say that a ballean $\B$ is {\it weakly asymptotically scattered} if, for every unbounded subset $Y$ of $X$, there is $\alpha \in P$ such that, for every $\beta \in P$, there exists $y\in Y$ such that $(B(y, \beta)\setminus B(y, \alpha))\cap X = \varnothing$. A subset $Y$ of $X$ is called weakly asymptotically scattered if the subballean $\B_Y$ is weakly asymptotically scattered. For $\kappa > \aleph_0$, each small subset $S_n$ from the proof of Theorem \ref{t_4} $(iii)$ is weakly asymptotically scattered, so $\olrr{\kappa}$ can be partitioned into $\aleph_0$ weakly asymptotically scattered subsets. We note that for $n\geqslant 2$, $S_n$ is not asymptotically scattered.

Let $\gamma$ be a cardinal. Following \cite{b11}, we say that a ballean $\B = (X,P,B)$ is {\it $\gamma$-extraresolvable} if there exists a family $\F$ of large subsets of $X$ such that $|\F| = \gamma$ and $F\cap F'$ is small whenever $F, F'$ are distinct members of $\F$. The {\it extraresolvability} of $\B$ is the cardinal
$$exres\, \B = \{\sup \gamma : \B \mbox{ is $\gamma$-extraesolvable}\}.$$
By \cite[Theorem 4]{b11}, $exres \olrr{\aleph_0} = \aleph_0$.

\begin{question}
For each $\kappa$, determine or evaluate $exres\,\kappa$.
\end{question}

Under some additional to ZFC assumptions, an existence of $\aleph_1$-Kurepa tree (see \cite[p. 74]{tt} we prove that $exres \olrr{\aleph_1} > \aleph_1$. Recall that a poset $(T,<)$ is a tree if for any $t\in T$ the set $sub(t)=\{x\in T: x<t\}$ is well-ordered. For any $t\in T$, $L(t)$ denotes an ordinal type of $sub(t)$ and $L(T)=\sup\{L(t)+1: t\in T\}$. A tree $T$ is a $\kappa$-tree if $L(T)=\kk$ and $|Lev_\alpha(T)|<\kk$, $\alpha < L(T)$ where $Lev_\alpha(T)=\{t\in T: L(t)=\alpha\}$. A $\kk$-tree $T$ is called $\kk$-Kurepa if there exists $\kk^+$ maximal chains of cardinality $\kk$ in $T$. Now let $T$ be an $\aleph_1$-Kurepa tree. Fix any bijection $f_\alpha: Lev_\alpha(T)\to [\alpha\w;(\alpha+1)\w)$, $\alpha < \aleph_1$, and define $f=\bigcup f_\alpha$, $B_i=f^{-1}(C_i),i<\aleph_2$, where $C_i$ is a maximal chain in $T$. Then every $B_i$ is large and, for any $i\neq j$, $|B_i\cap B_j|<\aleph_1$.

\begin{question} In ZFC, does there exist a cardinal $\kk$ such that $exres\olrr{\kappa}>\kappa$? \end{question}

\section{The Combinatorial derivation}

Given a subset $A$ of $\kappa$, we denote 

$$\Delta(A)=\{\alpha \in \kappa: A + \alpha \cap A \mbox{ is unbounded in }\kappa\},$$
$A+\alpha = \{\beta + \alpha: \beta \in A\}$, and say that $\Delta: \PP_\kappa \to \PP_\kappa$ is the combinatorial derivation. For a group version of $\Delta$ and motivation of this definition see \cite{prot}.

\begin{theorem}\label{t_4_1}
If a subset $A$ of $\kappa$ is not small and $\kappa$ is regular then $\Delta(A)$ is large.
\end{theorem}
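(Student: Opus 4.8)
The plan is to argue by contrapositive: assume $\Delta(A)$ is not large and deduce that $A$ is small. Since $A$ is not small means $\kappa\setminus B(A,\alpha)$ fails to be large for some $\alpha\in\kappa$, i.e. there is a radius $\alpha$ such that $B(A,\alpha)$ is large; so it suffices to show that if $\Delta(A)$ is not large then for every $\alpha\in\kappa$ the set $\kappa\setminus \olrr{B}(A,\alpha)$ is large. Fix $\alpha$. The key reformulation is that $\gamma\notin\Delta(A)$ means $(A+\gamma)\cap A$ is bounded in $\kappa$; equivalently, there is a threshold $\delta(\gamma)<\kappa$ beyond which $A$ and its shift $A+\gamma$ do not meet. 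I would first translate "large" into order-theoretic language: since $\olrr{B}(x,\beta)=[x,x+\beta]\cup\{y:x\in[y,y+\beta]\}$, a set $Y$ is large in $\olrr{\kappa}$ iff there is $\beta$ with $\kappa=\bigcup_{y\in Y}[y-\beta,y+\beta]$ in the obvious sense, i.e. $Y$ meets every interval $[\xi,\xi+\beta]$ for $\xi$ large enough. So $\Delta(A)$ not large gives: for every $\beta<\kappa$ there are arbitrarily large $\xi<\kappa$ with $[\xi,\xi+\beta]\cap\Delta(A)=\varnothing$.

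The heart of the argument is then a counting/pigeonhole step using regularity of $\kappa$. Pick $\beta<\kappa$ and an interval $I=[\xi,\xi+\beta]$ disjoint from $\Delta(A)$, with $\xi$ as large as we like. For each $\gamma\in I$ we have a bound $\delta(\gamma)<\kappa$ past which $A+\gamma$ misses $A$; since $|I|=|\beta|<\kappa$ and $\kappa$ is regular, $\delta^*:=\sup_{\gamma\in I}\delta(\gamma)<\kappa$. Now I want to find, above $\delta^*$, a point of $\kappa$ that is far (in the $\olrr{B}(\cdot,\alpha)$ sense) from $A$; equivalently an interval of length $\sim 2\alpha$ above $\delta^*$ disjoint from $A$. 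Suppose not: then $A$ is "$2\alpha$-syndetic" above $\delta^*$, meaning every interval of length $2\alpha$ there meets $A$. Combining this with the interval $I\subseteq\kappa\setminus\Delta(A)$ should force a contradiction: if $A$ is syndetic above $\delta^*$ and $|I|$ is large compared with the gap $2\alpha$, then for many $\gamma\in I$ the shift $A+\gamma$ must meet $A$ arbitrarily high up (because a syndetic set, shifted by an amount smaller than a bounded multiple of its gap, overlaps itself cofinally), so such $\gamma\in\Delta(A)$, contradicting $I\cap\Delta(A)=\varnothing$. Choosing $|I|$ bigger than $2\alpha$ (which we may, since $\beta$ ranges over all of $\kappa$ and $\alpha$ is fixed) makes this overlap unavoidable.

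Carrying this out: from the syndeticity of $A$ above $\delta^*$ with gap $\le 2\alpha$, and an interval $I=[\xi,\xi+\beta]$ with $|\beta|>2|\alpha|$ and $\xi>\delta^*$, I would locate two elements $a,a'\in A$ with $a<a'$ and $a'-a=:\gamma\in I$ (this is where $|I|>2\alpha$ and syndeticity give a repeated "difference" landing in $I$), and then, again by syndeticity, find cofinally many such pairs with the same difference $\gamma$, witnessing $\gamma\in\Delta(A)$ — contradiction. Hence above every $\delta^*$ obtained from a long enough $I$ there is a $2\alpha$-gap in $A$, i.e. a point of $\kappa\setminus\olrr{B}(A,\alpha)$; and since $\xi$ (hence $\delta^*$) can be taken arbitrarily large, $\kappa\setminus\olrr{B}(A,\alpha)$ is cofinal, and by a routine argument (its complement $\olrr{B}(A,\alpha)$ having bounded gaps is exactly what we have just refuted on a cofinal set) it is large. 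As $\alpha$ was arbitrary, $A$ is small.

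The main obstacle I anticipate is the combinatorial lemma in the last two paragraphs — precisely, extracting a \emph{single} difference $\gamma\in I$ that recurs cofinally in $A$, rather than merely getting, for each tall level, \emph{some} difference in $I$. Regularity of $\kappa$ is exactly what should rescue this: splitting $\kappa$ above $\delta^*$ into cofinally many blocks, each block contributes a difference from the set $I$ of size $<\kappa$, so by regularity one value $\gamma\in I$ is hit cofinally often, giving $\gamma\in\Delta(A)\cap I=\varnothing$. Making the bookkeeping of thresholds and the passage between "syndetic", "bounded gaps", and "large" clean is the part that needs care; everything else is the order arithmetic of intervals $[x,x+\alpha]$ already set up in the paper.
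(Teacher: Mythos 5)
Your central pigeonhole step --- for cofinally many $a\in A$ a difference $\gamma_a$ lands in a fixed set $I$ of cardinality $<\kappa$, so by regularity a single $\gamma\in I$ recurs cofinally and hence $\gamma\in\Delta(A)$ --- is exactly the engine of the paper's proof, and that part is sound. The genuine gap lies in how you handle the duality between \emph{large} and \emph{thick}. A first symptom: ``$A$ is not small'' means that $\olrr{B}(A,\lambda)$ is \emph{thick} for some $\lambda$ (it contains balls of every radius), not that it is large; you conflate the two already in your opening reduction. The fatal instance of the same confusion is in the main step. To prove that $\kappa\setminus\olrr{B}(A,\alpha)$ is large you must refute \emph{thickness} of $\olrr{B}(A,\alpha)$, i.e.\ produce one radius $\rho$ such that \emph{every} ball $\olrr{B}(z,\rho)$ meets $\kappa\setminus\olrr{B}(A,\alpha)$. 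Instead you assume for contradiction that $A$ is $2\alpha$-syndetic on an entire tail above $\delta^*$ and refute that; this only yields \emph{some} gap of $A$ above each $\delta^*$, i.e.\ only that $\kappa\setminus\olrr{B}(A,\alpha)$ is unbounded. Unbounded is strictly weaker than large: as the paper itself notes after Theorem \ref{t_4}, the set $\Gamma$ of additively indecomposable ordinals is cofinal in $\kappa$ (for $\kappa>\aleph_0$) yet thin, and its complement is thick, so $\Gamma$ is not large. Hence the ``routine argument'' you invoke to pass from cofinal to large does not exist, and your contrapositive, as written, proves only that $\olrr{B}(A,\alpha)$ omits cofinally many points.

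The repair collapses your proof into the paper's direct argument. Take $\lambda$ with $\olrr{B}(A,\lambda)$ thick. For an arbitrary $\delta\in\kappa$, the intervals witnessing thickness can be chosen of length exceeding $\delta+\lambda+\lambda$ and starting (after a shift by at most $\lambda$) at points $x\in A$ that are cofinal in $\kappa$; each such interval yields $y_x\in[\delta,\delta+\lambda+\lambda]$ with $x+y_x\in A$, and regularity gives a single $y\in[\delta,\delta+\lambda+\lambda]\cap\Delta(A)$. Since $\delta$ was arbitrary, $\kappa=\olrr{B}(\Delta(A),\lambda+\lambda)$. The point is that the uniform statement ``$\Delta(A)$ meets $[\delta,\delta+\lambda+\lambda]$ for \emph{every} $\delta$'' must be extracted directly from thickness; by negating the wrong notion (co-boundedness/syndeticity instead of thickness), your framing loses exactly this uniformity.
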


\begin{proof}
Since $A$ is not small, there exists $\lambda \in \kappa$ such that $\olrr{B}(A, \lambda)$ is thick. We show that $\kappa = \olrr{B}(\Delta(A), \lambda + \lambda)$. 

We take an arbitrary $\delta \in \kappa$. Since $\olrr{B}(A, \lambda)$ is thick, for each $\alpha \in \kappa$, we can choose $x_\alpha\in A$, $x_\alpha > \alpha$ such that

$$[x_\alpha,x_\alpha+\delta + \lambda + \lambda]\subset \olrr{B}(A, \lambda).$$
Since $x_\alpha+\delta + \lambda\in \olrr{B}(A, \lambda)$, there exists $y_\alpha \in [\delta, \delta + \lambda + \lambda]$ such that $x_\alpha + y_\alpha \in A$. The set $X = \{x_\alpha: \alpha \in \kappa\}$ is cofinal in $\kappa$ and $|[\delta, \delta + \lambda + \lambda]| < \kappa$. Since $\kappa$ is regular, we can choose a cofinal subset $X' \subseteq X$ and $y\in [\delta, \delta + \lambda + \lambda]$ such that $y_\alpha = y$ for each $x_\alpha \in Y$. Hence, $y\in \Delta(A)$ and $\delta \in \olrr{B}(y, \lambda + \lambda)$ so $\kappa = \olrr{B}(\Delta(A), \lambda + \lambda)$.
\end{proof}

\begin{corollary}
For every finite partition of a regular cardinal $\kappa = A_1 \cup \ldots\cup A_n$, there exists $i\in \{1, \ldots, n\}$ such that $\Delta(A_i)$ is large.
\end{corollary}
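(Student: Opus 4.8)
The statement is an immediate consequence of Theorem \ref{t_4_1} once we know that a finite partition of $\kappa$ must contain a cell that fails to be small; so the plan is to isolate such a cell and feed it to that theorem.

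First I would argue that the cells $A_1,\dots,A_n$ cannot all be small. The quickest route is to quote the already computed invariant: by Theorem \ref{t_4}~$(iii)$ we have $cores\,\olrr{\kappa}=\aleph_0$, and $cores\,\olrr{\kappa}$ is, by definition, the least cardinality of a covering of $\kappa$ by small subsets; since $\aleph_0>n$, there is no covering of $\kappa$ by $n$ small subsets, in particular the partition $\kappa=A_1\cup\dots\cup A_n$ is not such a covering. (Alternatively, one can observe that the small subsets of any ballean form an ideal --- see \cite[Chapter 9]{b13} --- while $\kappa=X$ itself is not small, because $X\setminus\olrr{B}(X,\alpha)=\varnothing$ is never large; hence a finite union of small subsets can never exhaust $\kappa$.) Either way, fix $i\in\{1,\dots,n\}$ with $A_i$ not small.

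Then I would simply apply Theorem \ref{t_4_1} to this $A_i$: it is not small and $\kappa$ is regular, so $\Delta(A_i)$ is large, which is exactly the assertion to be proved.

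There is essentially no obstacle: the only point requiring a sentence is the observation that a finite partition of $\kappa$ contains a non-small cell, and this is delivered by the value $cores\,\olrr{\kappa}=\aleph_0$ (equivalently, by the ideal property of small sets together with the fact that $\kappa$ is not small). Everything else is a direct invocation of Theorem \ref{t_4_1}, and regularity of $\kappa$ is used only through that theorem.
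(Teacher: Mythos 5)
Your proposal is correct and follows exactly the paper's route: the paper's entire proof is the remark that at least one cell of a finite partition is not small, after which Theorem \ref{t_4_1} applies. Your additional justification of the non-smallness of some cell (via $cores\,\olrr{\kappa}=\aleph_0$ or the ideal property of small sets) simply fills in a step the paper leaves implicit.
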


\begin{proof}
It suffices to note that at least one cell of the partition is not small.
\end{proof}

By Theorem \ref{t_4} $(iii)$, each subset $S_n = \{\alpha \in \kappa : ||\alpha|| = n\}$ of $\kappa$ is small. It is clear that $\Delta(S_n) \subseteq \{0\} \cup S_1 \cup \ldots \cup S_n$. Hence, $\Delta(S_n)$ is small and Corollary does not hold for countable partition even if $\kappa$ is arbitrary large.

\begin{question}
Is Theorem \ref{t_4_1} true for all singular cardinals?
\end{question}

By the definition of $\Delta$, $\Delta(A) =0$ for each thin subset $A$ of $\kappa$.

\begin{theorem}\label{t_4_2}
Let $\kappa$ be a regular cardinal, $A\subseteq \kappa$ and $0\in A$. Then there exist two thin subsets $X,Y$ of $\kappa$ such that $\Delta(X\cup Y) = A$.
\end{theorem}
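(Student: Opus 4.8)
The plan is to build $X$ and $Y$ as "sparse" sets whose elements sit in widely separated blocks, so that each of $X$, $Y$ is thin, yet the set of differences (translations) that occur unboundedly often between $X$ and $Y$ is exactly $A$. The governing idea: to force $\alpha\in\Delta(X\cup Y)$ we want to arrange, cofinally in $\kappa$, a pair $\xi\in X$, $\eta\in Y$ (or two elements of the same set) with $\eta=\xi+\alpha$; to prevent $\beta\notin\Delta(X\cup Y)$ we want such coincidences to stop happening past some bounded level. Since $\kappa$ is regular and $|A|\le\kappa$, enumerate $A=\{\alpha_i:i<|A|\}$ (with $\alpha_0=0$, using $0\in A$), and also fix a cofinal sequence or simply work level-by-level along $\kappa$ itself.

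First I would set up a block decomposition of $\kappa$: choose a strictly increasing continuous sequence $(\delta_\nu)_{\nu<\kappa}$ cofinal in $\kappa$ with $\delta_0=0$, chosen so that each gap $\delta_{\nu+1}-\delta_\nu$ is an additively indecomposable ordinal large enough to "contain" all the translates we need at stage $\nu$; regularity of $\kappa$ makes such a choice possible. In block $[\delta_\nu,\delta_{\nu+1})$ I would place one point $\xi_\nu\in X$ near the bottom of the block and, for each index $i$ in some initial segment of $|A|$ that grows with $\nu$ (a standard "diagonal" bookkeeping so every $\alpha_i$ gets used cofinally often), a point $\xi_\nu+\alpha_i$; the point with $i=0$ coincides with $\xi_\nu$ itself (this uses $0\in A$ and puts $\xi_\nu$ in $X\cap Y$ or handles the $\alpha_0=0$ case trivially). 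Distribute these so that $X$ gets the "base" points and $Y$ gets the "translated" points $\xi_\nu+\alpha_i$; one must be a little careful that within a single block the chosen points of $X$ (resp. $Y$) are themselves spread out, which is why the blocks and the $\alpha_i$'s used at stage $\nu$ must be chosen with enough room — here the additive indecomposability of the gap lengths and of the $\alpha_i$'s (or bounding them below the gap) is exploited.

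Next I would verify thinness of $X$ and of $Y$: given a radius $\beta\in\kappa$, since the block lengths $\delta_{\nu+1}-\delta_\nu$ tend to infinity (in the ordinal sense, cofinally), there is a bounded $V$ beyond which any two distinct points of $X$ lie at "distance" $>\beta$ in the $\olrr{B}$ sense — i.e. $\olrr{B}(\xi,\beta)\cap X=\{\xi\}$ for $\xi\in X\setminus V$ — because consecutive $X$-points are separated either by a whole block gap or by a within-block spacing that was arranged to exceed $\beta$ eventually; same for $Y$. Then I would check $\Delta(X\cup Y)=A$: for $\alpha_i\in A$, the pairs $(\xi_\nu,\xi_\nu+\alpha_i)$ with $\xi_\nu\in X$, $\xi_\nu+\alpha_i\in Y$ occur for cofinally many $\nu$ by the bookkeeping, and $\xi_\nu\to\kappa$, so $(X\cup Y)+\alpha_i$ meets $X\cup Y$ cofinally; hence $\alpha_i\in\Delta(X\cup Y)$. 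Conversely, if $\gamma\notin A$, I must show the set of $\zeta,\zeta'\in X\cup Y$ with $\zeta'=\zeta+\gamma$ is bounded: any such coincidence is either between two points inside one block or straddles two blocks; the straddling case is killed by choosing block gaps additively indecomposable and growing (a translate by a fixed $\gamma$ eventually cannot cross a gap), and the within-block case is killed because inside block $\nu$ the only differences realized among the finitely/few chosen points are among $\{\alpha_i - \alpha_j\}$ for the indices active at stage $\nu$ — and by choosing the $\alpha_i$'s to be, say, additively indecomposable or otherwise in "general position" one ensures $\alpha_i-\alpha_j\notin A$ forces boundedness, or more simply one arranges the active index set and the geometry so that the realized within-block differences are exactly $\{\alpha_i:\ i \text{ active}\}\cup\{0\}$ and nothing else. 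This last bookkeeping — guaranteeing that \emph{no unintended difference} is realized cofinally while \emph{every} $\alpha_i$ is — is the main obstacle, and it is where regularity of $\kappa$ and a careful choice of additively indecomposable block lengths and spacings (in the spirit of the Cantor-normal-form arguments used in Theorem \ref{t_4}) do the real work.
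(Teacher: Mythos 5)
There is a genuine gap, and it sits exactly where you flag ``the main obstacle''. Your scheme puts, in a single block, one base point $\xi_\nu\in X$ together with \emph{all} the translates $\xi_\nu+\alpha_i$ for the indices $i$ active at stage $\nu$, and assigns those translates to $Y$. But then the mutual distances of the points of $Y$ inside one block are the ordinal differences of elements of $A$, and these are dictated by the given set $A$: you cannot choose them, and no choice of blocks, of $\xi_\nu$, or of additively indecomposable gap lengths changes them. If, say, $\omega\in A$ and $\omega+1\in A$, then $\xi_\nu+\omega$ and $\xi_\nu+\omega+1$ both lie in $Y$ for cofinally many $\nu$, so $Y$ is not thin (take radius $1$); moreover $1\in\Delta(X\cup Y)$ even though possibly $1\notin A$, so the equality $\Delta(X\cup Y)=A$ fails as well. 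Your suggested repairs do not close this: ``choosing the $\alpha_i$ in general position'' is not available, since the $\alpha_i$ enumerate the prescribed set $A$, and ``arranging that the realized within-block differences are exactly the active $\alpha_i$'' is impossible as long as two translates of the same base point occupy the same block.

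The missing idea, which is what the paper does, is to decouple the translates: at stage $\alpha$ introduce \emph{many} base points $x_{\alpha\beta}$ ($\beta\le\alpha$), each responsible for exactly \emph{one} element $a_\beta$ of $A$, so that $X=\{x_{\alpha\beta}:\beta\le\alpha<\kappa\}$ and $Y=\{x_{\alpha\beta}+a_\beta:\beta\le\alpha<\kappa\}$, with consecutive base points of a stage separated by more than $b_\alpha+b_\alpha$ where $b_\alpha=\max\{\alpha,a_\alpha\}$, and the whole stage placed above everything built previously. Each pair $(x_{\alpha\beta},\,x_{\alpha\beta}+a_\beta)$ then realizes the difference $a_\beta$ for every $\alpha\ge\beta$, hence cofinally, while the wide separation of base points guarantees that $X$ and $Y$ are thin and that no difference outside $A$ is realized unboundedly often; regularity of $\kappa$ enters only to carry out this transfinite choice. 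Your block picture is salvageable if you insist that each block carries exactly \emph{one} active index and use diagonal bookkeeping to make every index active cofinally often, but that is then a reindexing of the paper's construction rather than the ``growing initial segment of indices per block'' you describe.
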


\begin{proof}
We enumerate $A = \{a_\alpha: \alpha < |A|\}$, put $a_\alpha=0$ for each $\alpha \in \kappa$, $\alpha \geqslant |A|$, and $b_\alpha = \max\{\alpha, a_\alpha\}$ for every $\alpha \in \kappa$.

We denote $x_{00} = 0$. Since $\kappa$ is regular, for each $\alpha \in \kappa$, we can choose inductively an increasing sequence ($x_{\alpha\beta})_{\beta \leqslant \alpha}$ in $\kappa$ such that
\begin{itemize}
\item[(1)] $x_{\alpha 0} > \sup_{\beta < \alpha} x_{\beta\beta} + b_\alpha + b_\alpha$;
\item[(2)] $x_{\alpha\beta+1} > x_{\alpha\beta} + b_\alpha + b_\alpha,  \beta < \alpha$.
\end{itemize}
After $\kappa$ steps, we denote $X = \{x_{\alpha\beta}: \beta \leqslant \alpha < \kappa\}$, $Y = \{x_{\alpha\beta} + a_\beta: \beta \leqslant \alpha < \kappa \}$. By (1) and (2), $X$ and $Y$ are thin and $\Delta(X\cup Y) = A$.
\end{proof}

\begin{question}
Is Theorem \ref{t_4_2} true for all singular cardinals?
\end{question}

For a cardinal $\kappa$, we denote by $\kappa^\#$ the family of all ultrafilters on $\kappa$ whose members are unbounded in $\kappa$. 

We say that a subset $A$ of $\kappa$ is {\it sparse} if, for every $\UU\in \kappa$, the set $\{\alpha \in \kappa: A\in \UU + \alpha\}$ is bounded in $\kappa$, where $\UU + \alpha$ is an ultrafilter with the base $\{U+\alpha: U\in \UU\}$. Clearly, the family of all sparse subsets of $\kappa$ is an ideal in the Boolean algebra $\PP_\kappa$. If $A$ is thin (in particular, $A$ is bounded) then $|\{\alpha \in \kappa: A\in \UU + \alpha\}| \leqslant 1$ so $A$ is sparse.

We use sparse subsets to characterize strongly prime ultrafilters in $\kappa^\#$. We endow $\kappa$ with the discrete topology, denote by $\beta\kappa$ the Stone-\v{C}ech compactification of $\kappa$ and use the universal property of $\beta \kappa$ to extend the addition $+$ from $\kappa$ to $\beta \kappa$ in such a way that, for each $\alpha \in \kappa$, the mapping $x\mapsto x+\alpha: \beta\kappa \to \beta \kappa$ is continuous, and, for each $\UU \in \beta \kappa$, the mapping $x\mapsto \UU + x: \beta \kappa \to \beta\kappa$ is continuous (see \cite[Chapter 4]{b4}. To describe a base for the ultrafilter $\UU + \V$, we take any element $V\in \V$ and, for every $x\in V$, choose some element $U_x\in \UU$. Then $\cup_{x\in V}(U_x + x)\in \UU + \V$, and the family of subsets of this form is a base for $\UU + \V$.

We note that $\kappa^\#$ is a subsemigroup of $\beta\kappa$ and say that an ultrafilter $\UU\in \kappa^\#$ is {\it strongly prime} if $\UU$ is not in the closure of $\kappa^\# + \kappa^\#$. With above definitions, we get the following characterization.

\begin{theorem}
An ultrafilter $\UU \in \kappa^\#$ is strongly prime if and only if some member $U\in \UU$ is sparse in $\kappa$.
\end{theorem}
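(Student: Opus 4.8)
The plan is to prove both directions via the description of a base for $\UU+\V$ recalled just before the statement, namely that sets of the form $\bigcup_{x\in V}(U_x+x)$ with $V\in\V$ and $U_x\in\UU$ form a base for $\UU+\V$. For the "if" direction, suppose $U\in\UU$ is sparse; I want to show $\UU$ is not in the closure of $\kappa^\#+\kappa^\#$. Since the sets $\bar U=\{\V\in\beta\kappa:U\in\V\}$ are a basic clopen neighbourhood of $\UU$ in $\beta\kappa$, it suffices to show that no ultrafilter of the form $\W+\V$ with $\W,\V\in\kappa^\#$ lies in $\bar U$, i.e.\ $U\notin\W+\V$ for all such $\W,\V$. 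Suppose toward a contradiction that $U\in\W+\V$. Then there is $V\in\V$ and a choice $U_x\in\W$ for $x\in V$ with $\bigcup_{x\in V}(U_x+x)\subseteq U$, so in particular $U-x\supseteq$ a member of $\W$ for each $x\in V$ (writing $U-x$ for $\{y:y+x\in U\}$), which forces $U\in\W+x$ for every $x\in V$. But $V$ is unbounded since $\V\in\kappa^\#$, so $\{x\in\kappa:U\in\W+x\}$ is unbounded, contradicting sparseness of $U$ with the ultrafilter $\W$. Hence $U\notin\W+\V$, and $\UU$ is strongly prime.

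For the "only if" direction, suppose no member of $\UU$ is sparse; I must show $\UU\in\mathrm{cl}(\kappa^\#+\kappa^\#)$, i.e.\ every basic neighbourhood $\bar U$ of $\UU$ (with $U\in\UU$) meets $\kappa^\#+\kappa^\#$. Fix $U\in\UU$. Since $U$ is not sparse, there is an ultrafilter $\W\in\beta\kappa$ — and we may take $\W\in\kappa^\#$, since the witnessing set below will be unbounded — such that $D:=\{x\in\kappa:U\in\W+x\}$ is unbounded in $\kappa$. Choose any $\V\in\kappa^\#$ with $D\in\V$ (possible since $D$ is unbounded). For each $x\in D$ pick $U_x\in\W$ with $U_x+x\subseteq U$; then $\bigcup_{x\in D}(U_x+x)\subseteq U$ witnesses $U\in\W+\V$, so $\W+\V\in\bar U\cap(\kappa^\#+\kappa^\#)$. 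Since $U\in\UU$ was arbitrary, $\UU$ is in the closure of $\kappa^\#+\kappa^\#$, i.e.\ $\UU$ is not strongly prime.

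The point requiring care — the main obstacle — is ensuring the auxiliary ultrafilters produced actually lie in $\kappa^\#$ (have only unbounded members), and handling the "$\W$ may be principal" issue: if $U$ is not sparse, the set $\{x:U\in\W+x\}$ being unbounded must be arranged for some $\W$ with unbounded members. If $\W$ were principal at a point $w$, then $\W+x$ is principal at $w+x$ and $U\in\W+x$ means $w+x\in U$; unboundedness of $\{x:w+x\in U\}$ is then just unboundedness of $U$, which does hold but gives $\W\notin\kappa^\#$. So one checks that non-sparseness of $U$ against the \emph{full} $\beta\kappa$ collapses, for the principal case, exactly to "$U$ unbounded", which is automatic; to get a genuine witness in $\kappa^\#$ one uses that the definition of sparse quantifies over $\UU\in\kappa$ — here I read the paper's "$\UU\in\kappa$" as "$\UU\in\kappa^\#$", which is the only sensible reading — so non-sparse literally gives a $\W\in\kappa^\#$ with $\{x:U\in\W+x\}$ unbounded, and the construction above goes through. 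A secondary routine check is that $\V$ can be chosen in $\kappa^\#$: any ultrafilter extending the filter of cofinal subsets together with $D$ works, using that $D$ is unbounded.
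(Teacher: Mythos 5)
Your proof is correct. The paper actually states this theorem without any proof, and your argument is precisely the intended one: both directions follow from the displayed base $\bigcup_{x\in V}(U_x+x)$ for $\UU+\V$, with the ``if'' direction showing $\bar U\cap(\kappa^\#+\kappa^\#)=\varnothing$ for a sparse $U\in\UU$ and the ``only if'' direction building a witness $\W+\V\in\bar U$ from a failure of sparseness. Your reading of the paper's ``for every $\UU\in\kappa$'' as ``for every $\UU\in\kappa^\#$'' is the right one --- it is the only reading under which the paper's preceding remark that every thin (unbounded) set is sparse is true, since for a principal $\W$ at $w$ the set $\{x: w+x\in A\}$ is unbounded for any unbounded $A$ --- and your handling of the choice of $\V\in\kappa^\#$ containing the unbounded set $D$ is the correct routine step.
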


\begin{question}
What can be said about inclusions between families of sparse and asymptotically scattered subsets of $\kappa$?
\end{question}

\begin{question}
Is a subset $A$ of $\kappa$ sparse provided that $\Delta(A)$ is sparse?
\end{question}

\section{Around $T$-points}

A free ultrafilter $\UU$ on an infinite cardinal $\kappa$ is called {\it uniform} if $|U| = \kappa$ for each $U\in \UU$. We say that a uniform ultrafilter $\UU$ on $\kappa$ is a {\it $T_\kappa$-point} if, for each minimal well-ordering $<$ of $\kappa$, some member of $\UU$ is thin in the ballean $((\kappa, <), (\kappa, <), \olrr{B})$. We begin with discussion of $T_{\aleph_0}$-points.

Let $G$ be a transitive group of permutations of $\kappa$. We consider a ballean $\B(G, \kappa) = (\kappa, [G]^{< \omega}, B)$, where $[G]^{< \omega}$ is the family of all finite subsets of $G$ and $B(x,F) = \{x\} \cup F(x)$ for each $x\in X$ and $F\in [G]^{< \omega}$, $F(x) = \{f(x), f\in F\}$.

A free ultrafilter $\UU$ on $\aleph_0$ is said to be a $T$-point if, for every countable transitive group $G$ of permutations of $\aleph_0$, some member of $\UU$ is thin in $\B(G, \aleph_0)$. By \cite[Theorem 1]{b7}, every $P$-point and every $Q$-point (in $\beta \omega$) are $T$-points. Under CH, there exist $P$- and $Q$-points but it is unknown \cite[Question 25]{b3} whether in each model of ZFC there exists either $P$-point or $Q$-point. By \cite{b5} and \cite{b6}, this is so if $\mathfrak{c} \leqslant \aleph_2$. By \cite[Proposition 4]{b7}, under CH, there exist a $T$-point which is neither $Q$-point nor weak $P$-point. It is an open question \cite[p.~348]{b8} whether $T$-points exist in ZFC without additional assumptions.

\begin{theorem}
Every $T$-point is a $T_{\aleph_0}$-point.
\end{theorem}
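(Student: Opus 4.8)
The plan is to deduce the theorem from the defining property of $T$-points by representing each cardinal ballean on $\aleph_0$ by a ballean of the form $\B(G,\aleph_0)$. Fix a minimal well-ordering $<$ of $\aleph_0$; since $<$ is minimal it has order type $\omega$, so it is given by an enumeration $\aleph_0=\{e_0,e_1,e_2,\dots\}$ with $e_0<e_1<e_2<\cdots$. Let $g$ be the involution $(e_0\,e_1)(e_2\,e_3)(e_4\,e_5)\cdots$ of $\aleph_0$ and $h$ the involution $(e_1\,e_2)(e_3\,e_4)(e_5\,e_6)\cdots$ (so $h$ fixes $e_0$), and put $G_<=\langle g,h\rangle$. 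Then $G_<$ is countable (two generators) and transitive: already $\langle gh\rangle$ is, since $gh$ is a single infinite cycle running through $\dots,e_4,e_2,e_0,e_1,e_3,e_5,\dots$, so $G_<$ is admissible in the definition of a $T$-point.

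The main step is to compare $\B(G_<,\aleph_0)$ with the cardinal ballean $((\aleph_0,<),(\aleph_0,<),\olrr{B})$. For $m<\omega$ let $F_m\subseteq G_<$ be the finite set of all words in $g,h$ of length at most $m$. Each of $g,h$ sends a point to a point whose $<$-index differs by at most $1$, and conversely, from $e_n$ one reaches every $e_j$ with $|j-n|\le m$ by a word of length $\le m$ (walk along consecutive indices, using that for $n\ge 1$ one of $g,h$ gives $e_{n-1}$ and the other gives $e_{n+1}$, and $g(e_0)=e_1$). Hence $B(e_n,F_m)=\{e_n\}\cup F_m(e_n)=\olrr{B}(e_n,m)$ for all $n$. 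Since bounded subsets of both balleans are exactly the finite sets, it follows that any $A\subseteq\aleph_0$ that is thin in $\B(G_<,\aleph_0)$ is thin in the cardinal ballean of $(\aleph_0,<)$: given $m$, a finite set $V$ witnessing thinness of $A$ for the radius $F_m$ also witnesses it for the radius $m$, because disjointness of the balls $B(a,F_m)$ is disjointness of the balls $\olrr{B}(a,m)$.

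Now let $\UU$ be a $T$-point; being a free ultrafilter on $\aleph_0$ it is uniform. Given a minimal well-ordering $<$, apply the $T$-point property to $G_<$ to get $U\in\UU$ thin in $\B(G_<,\aleph_0)$; by the previous paragraph $U$ is thin in the ballean of $(\aleph_0,<)$. As $<$ was arbitrary, $\UU$ is a $T_{\aleph_0}$-point. The crux is the construction of $G_<$ together with the ball comparison: one needs a group that is at once countable, transitive, and rich enough that its balls exhaust the radii of the cardinal ballean. The full group of bounded-displacement permutations of $(\aleph_0,<)$ would do the latter transparently but is uncountable, and the two-generated dihedral-type group $\langle g,h\rangle$ is an economical replacement. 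One should also note that $\B(G_<,\aleph_0)$ is strictly finer than the cardinal ballean — single powers of $gh$ give radii whose balls are not contained in any order-interval — so the two balleans are not asymorphic; but only the inclusion ``every interval ball is a $G_<$-ball'' is used, and that is the easy direction.
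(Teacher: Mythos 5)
Your proof is correct, but it takes a genuinely different route from the paper. The paper invokes an external characterization (Theorem 9 of the cited paper on balleans and $G$-spaces): $\UU$ is a $T$-point iff some member of $\UU$ is thin in every metrizable uniformly locally finite ballean on $\aleph_0$, and then simply observes that each ballean $((\aleph_0,<),(\aleph_0,<),\olrr{B})$ is metrizable and uniformly locally finite. You instead work directly from the definition of a $T$-point: for each minimal well-ordering $<$ you build an explicit countable transitive group $G_<$ (the infinite dihedral group generated by the two interlocking involutions) and verify the exact ball identity $B(e_n,F_m)=\olrr{B}(e_n,m)$, from which thinness transfers immediately since bounded sets are the finite sets in both balleans. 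This makes your argument self-contained where the paper's is a one-line reduction to a quoted theorem; in effect you reprove the relevant direction of that theorem in the special case needed. The verification is sound: words of length $\le m$ in $g,h$ displace $<$-indices by at most $m$, and conversely every index within distance $m$ is reached by a walk of length $\le m$, with the endpoint $e_0$ causing no trouble. One inessential remark at the end is wrong: powers $(gh)^k$ displace indices by at most $2k$, so every ball of $\B(G_<,\aleph_0)$ \emph{is} contained in an order-interval uniformly in the center, and the two balleans are in fact asymorphic via the identity; but since your proof only uses the inclusion of interval balls into $G_<$-balls, this does not affect the argument.
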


\begin{proof}
A ballean $\B = (X,P,B)$ is called {\it uniformly locally finite} if, for each $\alpha \in P$, there exists a natural number $n$ such that $|B(x, \alpha)|\leqslant n$ for every $x\in X$. By \cite[Theorem 9]{b8}, for $X = \aleph_0$ and a free ultrafilter $\UU$ on $X$, the following statements are equivalent
\begin{itemize}
\item[{\it (i)}] $\UU$ is a $T$-point;
\item[{\it (ii)}] for every metrizable uniformity locally finite ballean $\B$ on $X$, some member of $\UU$ is thin in $\B$;
\item[{\it (iii)}]   for every sequence $(\mathcal{F}_n)_{n\in\omega}$ of uniformly bounded coverings of $X$, there exists $U\in\mathcal{U}$ such that, for each $n\in\omega$, $|F\cap U|\leqslant 1$ for all but finitely many $F\in\mathcal{F}_n$. A covering $\mathcal{F}$ of $\omega$ is uniformly bounded if there is $m\in\aleph_0$ such that, for each $x\in X$, $|st(x,\mathcal{F})|\leqslant m$, $st(x,\mathcal{F})=\bigcup\{F\in\mathcal{F}:x\in F\}$.
\end{itemize}

The equivalence $(i)\Leftrightarrow (ii)$ implies that every $T$-point is a $T_{\aleph_0}$-point because each ballean $((\aleph_0, <), (\aleph_0, <), \olrr{B})$ is metrizable and uniformly locally finite.
\end{proof}

\begin{question}
Does there exist a $T_{\aleph_0}$-point but not a $T$-point?
\end{question}

\begin{question}
Does there exist a $T_{\aleph_0}$-point in ZFC?
\end{question}

Let us call a free ultrafilter $\UU$ on $\aleph_0$ to be {\it $S$-point} if, for every minimal well ordering $<$ of $\aleph_0$, some member of $\UU$ is small in the ballean $((\aleph_0, <), (\aleph_0, <), \olrr{B})$.

\begin{question}
Does there exist an $S$-point in ZFC?
\end{question}

\begin{theorem}\label{t_6}
In ZFC, for every uncountable regular cardinal $\kappa$, there exists a $T_\kappa$-point.
\end{theorem}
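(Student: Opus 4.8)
The plan is to build a $T_\kappa$-point directly by transfinite recursion of length $2^\kappa$, diagonalising against all minimal well-orderings of $\kappa$. Here a ``minimal well-ordering'' of $\kappa$ is an order isomorphic to the ordinal $\kappa$, so each ball structure $((\kappa,<),(\kappa,<),\olrr B)$ is just a relabelled copy of $\olrr\kappa$; since $\kappa$ is a regular cardinal, $\olrr\kappa$ has a natural cofinal chain of radii of length $\kappa=cf\,\kappa$, and the key structural fact I will exploit is that $\olrr\kappa$ is \emph{cellular} (Theorem~3, as $\kappa>\aleph_0$) and, more importantly, that it can be partitioned into $\kappa$ thin subsets is \emph{false} in general — instead I use that for each radius $\gamma$ the balls $\olrr B(x,\gamma)$ have size $<\kappa$ and a ``thin'' set is one meeting each such ball in a bounded-many points after discarding a bounded piece. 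The first step is therefore to reformulate the $T_\kappa$-point condition combinatorially: $\UU$ is a $T_\kappa$-point iff for every well-ordering $<$ of type $\kappa$ and every $\gamma<\kappa$ there is $U\in\UU$ and a $<$-bounded set $V$ with $|\,U\cap\olrr B(x,\gamma)\,|\le 1$ for all $x\notin V$. Because there are $2^\kappa$ such orderings and, for a fixed ordering, only $cf\,\kappa=\kappa$ essentially different radii, the whole list of ``tasks'' has length $2^\kappa$.

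Next I would set up the recursion. Enumerate all pairs $(<_\xi,\gamma_\xi)$, $\xi<2^\kappa$, of a type-$\kappa$ well-ordering and a radius. Build an increasing chain of filters $\F_\xi$, each generated by $\le|\xi|+\kappa$ sets, each member of size $\kappa$, so that at stage $\xi$ some $U_\xi\in\F_{\xi+1}$ is thin for $(<_\xi,\gamma_\xi)$. The heart of the construction is the \textbf{one-step lemma}: given a filter $\F$ with a base of $<2^\kappa$ sets each of size $\kappa$, and given a single type-$\kappa$ well-ordering $<$ with radius $\gamma<\kappa$, there is a set $W$ of size $\kappa$ which is thin for $(<,\gamma)$ and has $|W\cap F|=\kappa$ for every $F$ in the base (so $\F\cup\{W\}$ still generates a proper filter with all members of size $\kappa$). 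To prove the one-step lemma, work in the order type $\kappa$: the balls $\olrr B(x,\gamma)=[x-\gamma,x+\gamma]$ partition, after passing to a cofinal set of ``centres'' spaced more than $2\gamma$ apart, into pairwise disjoint intervals $I_\eta$, $\eta<\kappa$, each of size $<\kappa$; a set is thin (for this single radius) as soon as it meets each $I_\eta$ in at most one point. Now I need to choose one point from cofinally many $I_\eta$'s while hitting every base set $F$ in a full $\kappa$-sized subset. Since there are $<2^\kappa\le 2^\kappa$ base sets but — crucially — at most $2^\kappa$ of them, and $\kappa$ is regular, a counting/fusion argument along $\kappa$ works only if the number of base sets is $\le\kappa$; for larger families I instead observe each $F$ of size $\kappa$ meets cofinally many intervals $I_\eta$, so I can recursively pick, for a club of stages, a point that lies in the ``currently poorest'' base set, using regularity of $\kappa$ to keep every base set's trace cofinal. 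This is essentially the standard construction of a thin ultrafilter base refined to respect a pre-existing filter.

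Finally, take $\UU=\bigcup_{\xi<2^\kappa}\F_\xi$, which is a filter all of whose members have size $\kappa$; extend it to an ultrafilter, which is then uniform and, by construction, hits each task, hence is a $T_\kappa$-point. \textbf{The main obstacle} is the one-step lemma when the filter already has a large base: reconciling ``thin for the new ordering'' (a condition forcing $W$ to be very sparse relative to $<_\xi$) with ``large intersection with every old $F$'' (forcing $W$ to be spread out relative to all \emph{previous} orderings simultaneously). The way I expect to handle it is to keep the filters small — generated by $\le\kappa+|\xi|$ sets — and to note that at stage $\xi$ we only need $W$ to meet the $\le\kappa+|\xi|$ \emph{generators}, not all filter members; then a single fusion of length $\kappa$ using regularity of $\kappa$ suffices, provided $|\xi|<2^\kappa$ implies $|\xi|\cdot\kappa<2^\kappa$ is not needed — rather we just need each generator to remain cofinal, which a careful round-robin over the (at most $\kappa+|\xi|$, hence possibly $>\kappa$) generators achieves by transfinite bookkeeping. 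Checking that this bookkeeping does not collapse some generator's trace to a bounded set is the delicate point, and it is exactly where regularity (hence uncountability) of $\kappa$ is used.
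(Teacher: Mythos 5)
There is a genuine gap, and you have in fact pointed at it yourself. At a stage $\xi$ of your recursion with $\kappa<|\xi|<2^\kappa$, the filter $\F_\xi$ has more than $\kappa$ generators, while the set $W$ produced by your one-step lemma contains at most one point from each of the $\kappa$ intervals $I_\eta$, so the fusion has length $\kappa$: a ``round-robin'' of length $\kappa$ cannot even visit each of the $>\kappa$ generators once, let alone guarantee $|W\cap F|=\kappa$ for each of them -- and to keep the filter extendible to a \emph{uniform} ultrafilter you in fact need every finite intersection of generators to meet $W$ in a set of size $\kappa$. Nothing in ZFC lets you push this step through; it is precisely the obstruction that makes the analogous recursive constructions of $Q$-points on $\omega$ require CH-type hypotheses, whereas the theorem is claimed in ZFC. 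A secondary flaw: your opening ``combinatorial reformulation'' lets the thin witness $U$ depend on the radius $\gamma$, but the definition of a $T_\kappa$-point requires a single $U\in\UU$ that is thin, i.e.\ works for all radii of the given ordering simultaneously; since an ultrafilter need not be $\kappa$-complete, you cannot recombine the sets $U_{(<,\gamma)}$, $\gamma<\kappa$, at the end.

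No diagonalization is needed. The paper's proof fixes one uniform ultrafilter $\UU$ containing every closed unbounded subset of $\kappa$ (the club filter on a regular uncountable $\kappa$ is a proper uniform filter, so such $\UU$ exists in ZFC). By Rudin's theorem such a club-ultrafilter is a $Q$-point: for every partition of $\kappa$ into cells of cardinality $<\kappa$, some member of $\UU$ meets each cell in at most one point. Given a minimal well-ordering $<$, one partitions $(\kappa,<)$ into consecutive intervals $[x_\alpha,x_\alpha+\gamma_\alpha)$ whose lengths $\gamma_\alpha$ run over the additively indecomposable ordinals; each cell has size $<\kappa$, so some $U\in\UU$ is a partial transversal of this partition, and because the interval lengths are eventually longer than any fixed radius, this single $U$ is thin for all radii at once. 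Thus one fixed ultrafilter handles all $2^\kappa$ orderings uniformly, which is exactly what your stage-by-stage construction cannot achieve.
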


\begin{proof}
We say that a uniform ultrafilter $\UU$ on $\kappa$ is a {\it club-ultrafilter} if $\UU$ contains a family of all closed unbounded subsets of $\kappa$. By \cite{b16}, $\UU$ is a $Q$-point, i.e. $\UU$ is selective with respect to any partition of $\kk$ into the cells of cardinality $<\kappa$.

Now we fix some club-ultrafilter $\UU$ on $\kappa$ and prove that $\UU$ is a $T_\kappa$-point. Let $<$ be a minimal well-ordering of $\kappa$ defined by some bijection $h:\kappa \to (\kappa, <)$. We partition inductively $(\kappa, <) = \cup_{\alpha \in (\kappa, <)}[x_\alpha, x_\alpha + \gamma_\alpha)$ into consecutive intervals, where $\Gamma = \{\gamma_\alpha: \alpha \in (\kappa, <)\}$ is the set of all additively indecomposable ordinals of $(\kappa, <)$. By above paragraph, some member $U\in \UU$  meets each subset $h^{-1}([x_\alpha, x_\alpha + \gamma_\alpha))$ in at most one point. By the choice of intervals $[x_\alpha, x_\alpha + \gamma_\alpha)$, $U$ is thin in the ballean $((\kappa, <), (\kappa, <), \olrr{B})$.
\end{proof}

\begin{question}
Is Theorem \ref{t_6} true for uncountable singular cardinals?
\end{question}

\vspace{1cm}

Department of Cybernetics

Kyiv University

Volodimirska 64

01033 Kyiv

Ukraine

i.v.protasov@gmail.com 

\vspace{1cm}

Department of Cybernetics

Kyiv University

Volodimirska 64

01033 Kyiv

Ukraine

opetrenko72@gmail.com 

\vspace{1cm}

Department of Mechanics and Mathematics

Kyiv University

Volodimirska 64

01033 Kyiv

Ukraine

slobodianiuk@yandex.ru

\end{document}